\newtheorem{lemma}{\bf Lemma}[section]
\newtheorem{corollary}[lemma]{\bf Corollary}
\newtheorem{proposition}[lemma]{\bf Proposition}
\newtheorem{fact}[lemma]{\bf Fact}
\newtheorem{definition}[lemma]{\bf Definition}
\newcommand{\Fil}{{\mathsf{\Phi}}}
\begin{document}

%...... title
\title{Order and interval topologies on complete Boolean algebras}

%...... authors
\author{Dominic van der Zypen}
\address{Federal Office of Social Insurance, CH-3003 Bern,
Switzerland}
\email{\tt dominic.zypen@gmail.com}

%......MSC subject class
\subjclass[2010]{05A18, 06B23}
\keywords{Lattice theory, Boolean algebra, order topology, interval topology, Birkhoff}

%...... Abstract...
\begin{abstract} 
Problem 76 of Birkhoff's {\em Lattice Theory} \cite{Bir} asks whether
for complete Boolean algebras the order topology and the interval
topology coincide. We answer this question in the negative.
\end{abstract}

%....... main()
\maketitle
\parindent = 0mm
\parskip = 2 mm
% . . . . . . . . . . . . . . . . .
\section{Introduction}

On p.~166 of Birkhoff's {\em Lattice Theory} \cite{Bir}, the following
problem is stated:
\begin{quote}
Does the order topology and the interval topology coincide 
for a complete Boolean algebra?
\end{quote}

In the following we introduce the concepts necessary
to understand and answer the question.

A {\em partially ordered set} (or {\em poset} for short) 
is a set $X$ with a binary
relation $\leq$ that is reflexive, transitive, and
anti-symmetric (i.e., $x,y\in X$ with $x\leq y$ and $y\leq x$
implies $x=y$). Often, a poset is denoted by $(X,\leq)$. 
A subset $D\subseteq X$ is called a {\em down-set} if it is
``closed under going down'', that is $d\in D, x\in X, x\leq d$
jointly imply $x\in D$. A special case of a down-set
is the set $$\downarrow_P x = \{y\in X: y\leq x\}$$
for $x\in X$. (Sometimes we just write $\downarrow x$
if the poset $P$ is clear from the context.)
Down-sets of this form are called {\em principal}.
If $S\subseteq X$ we say $S$ has a {\em smallest element} $s_0\in S$
if $s_0\leq s$ for all $s\in S$. Note that anti-symmetry
of $\leq$ implies that a smallest element is unique (if it exists
at all!). Similarly, we define a largest element. Moreover,
we set $$S^u = \{x\in X: x\geq s \textrm{ for all } s \in S\}$$
to be the {\em set of upper bounds} of $S$. The set
of lower bounds $S^{\ell}$ is defined analogously.

We say that a subset $S\subseteq X$ of a poset $(X,\leq)$
has an {\em infimum} or {\em largest lower bound} if
\begin{enumerate}
\item $S^{\ell} \neq \emptyset$, and
\item $S^{\ell}$ has a largest element.
\end{enumerate}
Again, an infimum (if it exists) is unique by anti-symmetry of the 
ordering relation, and it is
denoted by $\textrm{inf}(S)$ or $\bigwedge_X S$. The dual
notion (everything taken ``upside down'' in the poset)
is called {\em supremum} and is denoted by $\textrm{sup}(S)$ 
or $\bigvee_X S$. The infimum of the empty set is defined
to be the largest element of $X$ if it has one, and
the supremum is the smallest element of $X$. 

A poset $(X,\leq)$
in which infima and suprema exist for all $S\subseteq X$ 
is called a {\em complete lattice}. A {\em lattice}
has suprema and infima for finite non-empty subsets.
If $(X,\leq)$ is a poset and $x,y\in X$ we use the 
following notation $$x\vee y := \bigvee_X\{x,y\},$$ and
$x\wedge y$ is defined analogously. To emphasize the
binary operations $\vee,\wedge$, a lattice $(L,\leq)$
is sometimes written as $(L,\vee,\wedge)$. A lattice
is {\em distributive} if for all $x,y,z\in L$ we have
$$x\wedge(y\vee z) = (x\vee y)\wedge(x\vee z).$$

\begin{definition}
Given a poset $(X,\leq)$, the {\em interval topology} 
$\tau_i(X)$ is
given by the subbase $${\mathcal S} = \{X\setminus 
(\downarrow x): x\in X\} \cup \{X\setminus (\uparrow x):
x\in X\}.$$
\end{definition}

%-------------------------
\section{Convergence spaces}
We need the notion of 
order convergence expressed with filters (Birkhoff uses nets,
and filters offer an equivalent, but more concise approach to convergence
\cite{NetFil}). The underlying concept is that of a convergence
space.

Let $X\neq \emptyset$ be a set. By a {\em set filter} 
$\mathcal{F}$ on $P$ we mean 
a collection of subsets of $P$ such that:
\begin{itemize}
\item[-] $\emptyset \notin \mathcal{F}$;
\item[-] $A, B\in \mathcal{F}$ implies $A\cap B\in \mathcal{F}$;
\item[-] $U\in \mathcal{F}$, $U'\subseteq P$ and $U'\supseteq U$  
implies $U'\in \mathcal{F}$.
\end{itemize}

If $\mathcal{B}$ is a collection of subsets of $X$ such that
\begin{itemize}
\item[-] $\emptyset\notin \mathcal{B}$,
\item[-] for $A, B\in\mathcal{B}$ there is $C\in\mathcal{B}$
with $C\subseteq A\cap B$,
\end{itemize}
then we call $\mathcal{B}$ a {\em filter base}. The {\em
filter generated by} $\mathcal{B}$ is the collection
of subsets of $X$  that contain some member of $\mathcal{B}$.

If $\mathcal{F}\subseteq\mathcal{G}$ are filters on $X$
we say that $\mathcal{G}$ is a 
{\em super-filter} of $\mathcal{F}$. An {\em ultrafilter}
is a maximal filter with respect to set inclusion (i.e.
it has no proper super-filters). Given $x_0\in X$ we set
$$P_{x_0} =\{A\subseteq X: x_0\in A\},$$ which is easily
seen to be an ultrafilter.

By $\Fil(X)$ we denote the set of filters on $X$.

\begin{definition}\label{convdef}
A {\em convergence space} is a pair $(X,\to)$ where
$X$ is a non-empty set and $\to\subseteq \Fil(X)\times X$ is
a relation satisfying the following properties:
\begin{enumerate}
\item If $\mathcal{F}\subseteq\mathcal{G}$ are elements
of $\Fil(X)$ such that $\mathcal{F}\to x$, then $\mathcal{G}\to x$, and
\item for all $x\in X$ we have $P_x \to x$.
\end{enumerate}

(Note that we write $\mathcal{F} \to x$ instead of $(\mathcal{F}, x)\in\to$.)

If $\mathcal{F}\to x$ for some $\mathcal{F}\in\Fil(X)$ and $x\in X$ we 
say that $\mathcal{F}$ {\em converges} to $x$.
\end{definition}
To every convergence relation $\to$ as described above we
can associate a topology on the base set $X$ by setting
$$\tau_{\to} = \{U\subseteq X: \textrm{ if } \mathcal{F}\in\Fil(X)
\textrm{ and } u\in U \textrm{ with } \mathcal{F}\to u \textrm{ we have }
U\in\mathcal{F}\}.$$
It is a routine exercise to verify that $\tau_\to$ is a topology.

We will use the following fact later on:
\begin{fact}\label{convfact}
If a filter $\mathcal{F}$ contains all open neighborhoods of $x$
in the topological space $(X,\tau_\to)$ then $\mathcal{F}\to x$.
\end{fact}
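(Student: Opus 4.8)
The plan is to prove the conclusion $\mathcal{F}\to x$ directly, by isolating a single filter whose convergence forces the rest. Write $\mathcal{N}$ for the collection of $\tau_\to$-open sets containing $x$, and let $\mathcal{U}_x$ be the filter generated by $\mathcal{N}$, whose members are the subsets of $X$ containing some open neighbourhood of $x$; this $\mathcal{N}$ is a filter base because the intersection of two open neighbourhoods of $x$ is again one. Since $\mathcal{F}$ is upward closed and contains every member of $\mathcal{N}$ by hypothesis, it contains every superset of a member of $\mathcal{N}$, that is $\mathcal{U}_x\subseteq\mathcal{F}$. By clause (1) of Definition \ref{convdef} it therefore suffices to prove the single statement $\mathcal{U}_x\to x$: convergence of $\mathcal{F}$ then follows at once because $\mathcal{F}$ is a super-filter of $\mathcal{U}_x$. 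So the entire Fact reduces to showing that the neighbourhood filter of $x$ converges to $x$ in the original relation $\to$.

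To attack $\mathcal{U}_x\to x$ I would first pin down $\mathcal{U}_x$ intrinsically in terms of $\to$. Unwinding the definition of $\tau_\to$, an open set $U$ belongs to every filter that $\to$-converges to any of its points; in particular each member of $\mathcal{N}$ belongs to every filter converging to $x$, which yields the inclusion $\mathcal{U}_x\subseteq\bigcap\{\mathcal{G}\in\Fil(X):\mathcal{G}\to x\}$. Together with clause (2), which guarantees that at least one filter, namely $P_x$, converges to $x$, this exhibits $\mathcal{U}_x$ as the natural candidate for the coarsest filter converging to $x$. The target $\mathcal{U}_x\to x$ is then the assertion that this coarsest candidate is itself a genuine limit filter.

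I expect the final step to be the main obstacle, and indeed to carry the real content of the statement: passing from ``$\mathcal{U}_x$ sits below every filter converging to $x$'' to ``$\mathcal{U}_x$ itself converges to $x$.'' Clauses (1) and (2) only ever let one enlarge a convergent filter to a super-filter, never coarsen one, so this step cannot be purely formal and must exploit a stability property of $\to$ under intersections of convergent filters, precisely the property that makes $\to$ behave like a (pre)topological convergence. My plan is to verify exactly this closure property for the concrete convergence relation that $\to$ denotes in the sequel, and then to read off $\mathcal{U}_x\to x$ from the intersection characterisation above; combined with the reduction of the first paragraph this gives $\mathcal{F}\to x$ and completes the argument.
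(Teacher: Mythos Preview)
The paper supplies no proof of this Fact; it is asserted bare. Your reduction is correct: by axiom~(1) of Definition~\ref{convdef} the claim is equivalent to $\mathcal{U}_x\to x$, and your inclusion $\mathcal{U}_x\subseteq\bigcap\{\mathcal{G}\in\Fil(X):\mathcal{G}\to x\}$ is also correct. You are equally right that the last step---deducing $\mathcal{U}_x\to x$ from this---cannot be extracted from axioms~(1) and~(2) alone.

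In fact the Fact is false at the stated level of generality. Take $X=\{a,b,c\}$ and let $\to$ be the smallest relation satisfying Definition~\ref{convdef} that also contains $\mathcal{G}\to a$ and $\mathcal{H}\to a$, where $\mathcal{G}$ and $\mathcal{H}$ are the filters generated by $\{a,b\}$ and $\{a,c\}$ respectively. Any $\tau_\to$-open set containing $a$ must lie in $\mathcal{G}\cap\mathcal{H}=\{X\}$, so $\mathcal{U}_a=\{X\}$; but $\{X\}$ is not a super-filter of $\mathcal{G}$, of $\mathcal{H}$, or of any $P_z$, hence $\{X\}\not\to a$. Thus $\mathcal{U}_a$ contains every open neighbourhood of $a$ yet fails to converge to $a$.

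Your diagnosis is therefore exactly on target: the paper is tacitly assuming a pretopological closure property (stability of convergence under intersections of convergent filters) that its own Definition~\ref{convdef} does not guarantee. Your proposed repair---to verify this property for the concrete order-convergence relation $\to_o$ used downstream---is the natural move, but you have not actually carried it out, and it is not a triviality. As written, then, your proposal is a correct plan with the substantive step still open; that is already more than the paper itself provides.
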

Interestingly, many topological properties such as compactness,
Hausdorffness, and more can be put in terms of convergence
spaces. We will need the following later on:
\begin{proposition}\label{hausdorfflem}
If $(X,\to)$ is a convergence space, then the following are equivalent:
\begin{enumerate}
\item every $\mathcal{F}\in\Fil(X)$ converges to at most one point $x\in X$;
\item $(X,\tau_\to)$ is Hausdorff.
\end{enumerate}
\end{proposition}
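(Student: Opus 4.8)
The plan is to prove the two implications separately, using Fact~\ref{convfact} as the main tool for the less obvious direction.

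For $(2)\Rightarrow(1)$ I would argue by contraposition (or contradiction): suppose some $\mathcal{F}\in\Fil(X)$ converges to two distinct points $x\neq y$. If $(X,\tau_\to)$ were Hausdorff, we could pick disjoint open sets $U\ni x$ and $V\ni y$. By the very definition of $\tau_\to$, convergence $\mathcal{F}\to x$ together with $x\in U$ and $U$ open forces $U\in\mathcal{F}$, and likewise $V\in\mathcal{F}$. Since a filter is closed under finite intersections, $\emptyset = U\cap V\in\mathcal{F}$, contradicting the definition of a filter. This step is routine; the only thing to be careful about is correctly unwinding the definition of $\tau_\to$.

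For $(1)\Rightarrow(2)$, assume uniqueness of limits and suppose toward a contradiction that there exist $x\neq y$ that cannot be separated by disjoint open sets, i.e. $U\cap V\neq\emptyset$ for every open $U\ni x$ and every open $V\ni y$. I would then form the collection $\mathcal{B}=\{\,U\cap V : U\text{ open}, x\in U,\ V\text{ open}, y\in V\,\}$. By the non-separation hypothesis $\emptyset\notin\mathcal{B}$, and since $(U_1\cap V_1)\cap(U_2\cap V_2)=(U_1\cap U_2)\cap(V_1\cap V_2)\in\mathcal{B}$, it is a filter base. Let $\mathcal{F}$ be the filter it generates. Taking $V=X$ shows that every open neighbourhood of $x$ lies in $\mathcal{F}$, so Fact~\ref{convfact} gives $\mathcal{F}\to x$; symmetrically $\mathcal{F}\to y$. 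As $x\neq y$, this contradicts~(1), so in fact $x$ and $y$ are separated by disjoint open sets, and $(X,\tau_\to)$ is Hausdorff.

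The main obstacle is the $(1)\Rightarrow(2)$ direction: one has to manufacture a single filter converging to both $x$ and $y$, and the right idea is to generate it from the ``criss-cross'' intersections of neighbourhoods of the two points and then appeal to Fact~\ref{convfact}. Once that construction is in place, checking the filter-base axioms and the neighbourhood condition is a short verification.
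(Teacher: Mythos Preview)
Your argument for $(1)\Rightarrow(2)$ is essentially identical to the paper's: both assume non-Hausdorffness, take the family of intersections $U\cap V$ of open neighbourhoods of the two inseparable points, check it gives a filter, and invoke Fact~\ref{convfact} to conclude that this filter converges to both $x$ and $y$. You are slightly more careful in explicitly treating $\mathcal{B}$ as a filter \emph{base} and passing to the generated filter, whereas the paper calls the collection of intersections a filter directly; your formulation is the cleaner one.

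For $(2)\Rightarrow(1)$ the paper simply writes ``To be done'' and gives no argument, so here you actually supply more than the paper does. Your proof of this direction is correct: it uses nothing beyond the definition of $\tau_\to$ (an open set must belong to every filter converging to one of its points) and the filter axiom $\emptyset\notin\mathcal{F}$.
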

\begin{proof}
(1) $\Rightarrow$ (2). Assume that $(X,\tau_\to$ is not Hausdorff. Then
there are $x\neq y$ such that every neighborhood $U$ of $x$ intersects
every neighborhood $V$ of $y$. We let $\mathcal{F}$ be the collection
of all $U\cap V$ where $U$ is a neighborhood of $x$ and $V$ is a neighborhood
of $y$. A routine verification shows that $\mathcal{F}$ is indeed a filter,
and Fact \ref{convfact} implies that 
both $\mathcal{F}\to x$ and $\mathcal{F}\to y$,
so $\mathcal{F}$ does not converge to a unique point.

(2) $\Rightarrow$ (1). To be done.
\end{proof}
%-------------------------
\section{Order convergence}
Let $(P,\leq)$ be a poset.

If $\mathcal{F}$ is a set filter on $P$, then we set ${\mathcal F}^u = 
\bigcup\{F^u: F\in \mathcal{F}\}$ and define ${\mathcal F}^\ell$ similarly. 
For $x\in P$ and ${\mathcal F}$ a set filter on $P$ we write 
$${\mathcal F}\to_o x \textrm{ iff } \bigwedge\mathcal{F}^u = x = 
\bigvee \mathcal{F}^\ell$$ 
and say ${\mathcal F}$ {\em order-converges} to $x$.

It is easy to verify that the order convergence relation is
indeed a convergence relation as described in definition \ref{convdef}.
The topology $\tau_{\to_{o}}$ associated to $\to_o$  on the poset $(P,\leq)$
is denoted by $\tau_o(X)$, and we call it the 
{\em order (convergence) topology}.

Lemma \ref{hausdorfflem} and the fact that every filter on
a poset order-converges to at most one point by definition
jointly imply the following:
\begin{corollary}
If $(P,\leq)$ is a poset, then $(P,\tau_o(P))$ is Hausdorff.
\end{corollary}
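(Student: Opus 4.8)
The plan is to invoke Proposition \ref{hausdorfflem} in the direction (1) $\Rightarrow$ (2), which is precisely the implication established in the excerpt above; the unproven converse is not needed. To apply it I only have to check that the order convergence relation $\to_o$ on $(P,\leq)$ has the property that every set filter order-converges to at most one point.

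First I would recall that, as remarked immediately after the definition of $\to_o$, the pair $(P,\to_o)$ is a convergence space in the sense of Definition \ref{convdef}, so Proposition \ref{hausdorfflem} is applicable to it. Then, given a set filter $\mathcal{F}$ on $P$ with $\mathcal{F}\to_o x$ and $\mathcal{F}\to_o y$, I would simply unwind the definition: $\mathcal{F}\to_o x$ means $\bigwedge \mathcal{F}^u = x = \bigvee \mathcal{F}^\ell$, and likewise $\mathcal{F}\to_o y$ means $\bigwedge \mathcal{F}^u = y = \bigvee \mathcal{F}^\ell$. Since the infimum of a subset of a poset is unique whenever it exists, by anti-symmetry of $\leq$ (as noted in the introduction), it follows that $x = \bigwedge \mathcal{F}^u = y$. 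Hence $\mathcal{F}$ order-converges to at most one point, i.e. condition (1) of Proposition \ref{hausdorfflem} holds for $(P,\to_o)$.

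Applying that proposition now gives that $(P,\tau_{\to_o})$ is Hausdorff, and since $\tau_{\to_o} = \tau_o(P)$ by definition of the order topology, this is exactly the assertion of the corollary.

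There is no real obstacle here; the statement is essentially a bookkeeping consequence of the definitions. The two points worth flagging in the write-up are that the argument uses only the implication (1) $\Rightarrow$ (2) of Proposition \ref{hausdorfflem} (so the ``to be done'' converse is irrelevant), and that the uniqueness of infima, which is what forces $x=y$, is immediate from anti-symmetry of the order relation.
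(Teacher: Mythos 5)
Your proof is correct and matches the paper's own argument: the paper likewise derives the corollary from Proposition \ref{hausdorfflem} (only the (1) $\Rightarrow$ (2) direction, which is the one actually proved there) together with the observation that order limits are unique by definition, i.e.\ by uniqueness of infima and suprema. Nothing further is needed.
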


%-------------------------
\section{Interval topology on complete Boolean algebras}
\begin{proposition}\label{prop_keith}
The interval topology of a complete atomless Boolean 
algebra (such as $\mathcal{P}(\omega)/\mathrm{fin}$)
is not Hausdorff.
\end{proposition}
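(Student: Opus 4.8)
The plan is to show directly that in a complete atomless Boolean algebra $B$, any two distinct points fail to be separated by interval-open sets, by exploiting the fact that a basic open set in the interval topology is a finite intersection of complements of principal up-sets and down-sets, and such a set is ``large'' in a sense incompatible with atomlessness. First I would recall that a basic open neighborhood of a point $b\in B$ has the form $U = \bigcap_{i=1}^n (B\setminus\downarrow a_i) \cap \bigcap_{j=1}^m (B\setminus\uparrow c_j)$, where necessarily $b\not\leq a_i$ for each $i$ and $b\not\geq c_j$ for each $j$. So $U$ is the set of $x\in B$ with $x\not\leq a_i$ for all $i$ and $x\not\geq c_j$ for all $j$. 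The key reduction is: if $U$ and $V$ are two such basic open sets, I want to produce a point in $U\cap V$, which shows no two points can be separated, hence $B$ is not Hausdorff.

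The heart of the argument is a lemma of the form: given finitely many elements $a_1,\dots,a_n$ none of which is the top $1$, and finitely many elements $c_1,\dots,c_m$ none of which is the bottom $0$, there exists $x\in B$ with $x\not\leq a_i$ for all $i$ and $x\not\geq c_j$ for all $j$. (Note $b\not\leq a_i$ forces $a_i\neq 1$, and $b\not\geq c_j$ forces $c_j\neq 0$, so the hypotheses of the lemma are met by any basic neighborhood.) To prove this lemma I would first handle the up-set conditions: since each $a_i\neq 1$, the complement $a_i' \neq 0$; I want an $x$ that is not below any $a_i$, equivalently $x \wedge a_i' \neq 0$ for each $i$ — actually more carefully, $x \not\leq a_i$ iff $x \wedge a_i' \neq 0$. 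Then for the down-set conditions, $x \not\geq c_j$ iff $c_j \wedge x' \neq 0$, i.e. $x$ does not dominate $c_j$. Atomlessness enters to let me ``split'' elements: given any nonzero element I can find a strictly smaller nonzero element, and iterating I can find many pairwise disjoint nonzero pieces. The construction I envisage: using atomlessness, carve out of the finitely many relevant nonzero elements ($a_1',\dots,a_n'$ and $c_1,\dots,c_m$) small nonzero pieces, and let $x$ be a join of well-chosen small pieces so that $x$ meets each $a_i'$ (ensuring $x\not\leq a_i$) but $x$ misses enough of each $c_j$ to ensure $c_j \not\leq x$.

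The step I expect to be the main obstacle is handling the interaction between the two families simultaneously — a point forced to meet $a_i'$ might thereby be forced to contain part of some $c_j$, and if $c_j \leq a_i'$ this looks delicate. The resolution should be atomlessness again: since $a_i' \neq 0$, pick a nonzero $d_i < a_i'$ with $d_i \neq a_i'$, so $a_i' \setminus d_i \neq 0$ too; by choosing the pieces of $x$ small enough and disjoint, one arranges that $x \wedge a_i' \neq 0$ while still $x$ omits a nonzero part of every $c_j$ (indeed one can even aim for $x \wedge c_j$ to be a proper nonzero part of $c_j$, or simply ensure $c_j \wedge x' \neq 0$ by reserving a nonzero sliver of each $c_j$ to lie outside $x$). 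Carefully, I would proceed by: (i) for each $j$, using atomlessness split $c_j = c_j^0 \vee c_j^1$ into two disjoint nonzero pieces and decide $x$ will avoid $c_j^0$ entirely; (ii) for each $i$, note $a_i'$ is nonzero and find a nonzero piece of $a_i'$ disjoint from all the reserved $c_j^0$'s (possible since subtracting finitely many elements from a nonzero element in an atomless algebra can still leave something nonzero, as long as $a_i'$ is not covered — and if $a_i' \leq \bigvee_j c_j^0$ one splits further); (iii) let $x$ be the join of these chosen pieces of the $a_i'$. Then $x\in U\cap V$. Once this lemma is established, the Proposition follows immediately: distinct $b_1, b_2 \in B$ have basic neighborhoods $U_1, U_2$ whose intersection is nonempty by the lemma, so $(B,\tau_i(B))$ is not Hausdorff.
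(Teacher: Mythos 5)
Your strategy is sound and is essentially the open‑set dual of the paper's argument: the paper works with basic \emph{closed} sets (finite unions of principal ideals and principal filters) and shows that $0$ and $1$ cannot be separated, while you work with basic \emph{open} sets and prove the stronger statement that any two basic open neighborhoods meet, so that no pair of distinct points can be separated. Your key lemma --- given $a_1,\dots,a_n\neq 1$ and $c_1,\dots,c_m\neq 0$ there is an $x$ with $x\not\leq a_i$ and $x\not\geq c_j$ for all $i,j$ --- is true and does yield the proposition. Specializing to the pair $0,1$ lets the paper reduce each separating closed set to a pure ideal or a pure filter; your version avoids that reduction but must handle the mixed ideal/filter constraints simultaneously, and in exchange it delivers the stronger conclusion (no two points are separated), which is in the spirit of the paper's closing remark that Hausdorffness of the interval topology forces atomicity.

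The one genuine soft spot is your step (ii). After reserving a nonzero sliver $c_j^0\leq c_j$ that $x$ must avoid, you need a nonzero piece of each $a_i'$ disjoint from $\bigvee_j c_j^0$, and the fallback ``if $a_i'\leq\bigvee_j c_j^0$ one splits further'' is not yet an argument: shrinking one $c_{j_0}^0$ frees room inside $a_i'$ only modulo the \emph{other} reserved slivers, each $c_j^0$ must stay nonzero, and the repair has to be carried out coherently for all $i$ at once. The paper's device closes this gap cleanly and you should adopt it: pass to the finite subalgebra generated by $\{a_1,\dots,a_n,c_1,\dots,c_m\}$; its atoms $v_1,\dots,v_p$ form a partition of unity refining all the data, so each $a_i'$ and each $c_j$ lies above some $v_k$. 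By atomlessness choose $0<w_k<v_k$ for every $k$ and set $x=\bigvee_k w_k$. Since the $v_k$ are pairwise disjoint, $x\wedge v_k=w_k$; hence $x\wedge a_i'\geq w_k>0$ for some $k$, giving $x\not\leq a_i$, and for some $v_k\leq c_j$ we have $c_j\wedge x'\geq v_k\wedge x'=v_k\wedge w_k'\neq 0$, giving $c_j\not\leq x$. With that substitution for steps (i)--(iii), your proof is complete and coincides in its engine with the paper's construction of the element $W$.
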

\begin{proof}
The set of principal ideals and principal 
filters form a subbasis of closed
sets for the topology on $B$, so a typical basic closed set has the form
$I\cup F$ where $I$ is a finitely generated order ideal and $F$ is a finitely
generated order filter. Our goal is to show that there do not
exist proper basic closed subsets $C_0=I_0\cup F_0$ containing $0$ and not $1$
and $C_1=I_1\cup F_1$ containing $1$ and not $0$,
whose union $C_0\cup C_1$ equals $B$.
To obtain a contradiction, assume that
$B = C_0\cup C_1$ for proper basic closed sets
satisfying $0\in C_0-C_1$ and $1\in C_1-C_0$.

Since $1\notin C_1$ and $0\notin C_0$ we get that
$C_0 = I_0$ is a proper f.g. order ideal and $C_1 = F_1$ is a
proper f.g. order filter.

Suppose that $C_0$ is the order ideal generated by
some finite set $X$ and that $C_1$ is the order filter
generated by some finite set $Y$.
Let $C$ be the subalgebra of $B$ generated by $X\cup Y$. It is finite.
If the set of atoms of $C$ is $V=\{v_1\ldots,v_p\}$, then
for $u_i:=\bigvee_{j\neq i} v_j$ we get that
$U = \{u_1,\ldots,u_p\}$ is the set of coatoms of $C$.
Note that $V$ consists of the cells in a finite partition
of unity, and $U$ consists of the complementary elements.

The f.g. order ideal $D_0$ generated by
$U$ contains $C_0$ and still does not contain $1$, while the f.g.
order filter $D_1$ generated by $V$ contains $C_1$ but still does
not contain $0$. This reduces the original situation to one in
which $D_1$ is a proper order filter generated by the
cells in a finite partition of unity, while $D_0$ is the proper f.g.
order ideal generated by the complements of single cells
in the same partition of unity.

The set $V = \{v_1,\ldots,v_p\}$ consists of nonempty,
pairwise disjoint, elements of $B$. If $B$ is atomless, 
then we can choose $w_i$ such that $0<w_i<v_i$ for all $i$.
Let $W = \bigvee w_i$.
For each $i$ we have $v_i\not\leq W$,
since $v_i\cap W = v_i\cap w_i = w_i < v_i$. Moreover,
$W\not\leq u_j$ for any $j$, since $0<w_j\leq W$
and $w_j\not\leq u_j$. Thus $W\notin D_0\cup D_1$.
Hence also $W\notin C_0\cup C_1$.

In fact, the argument can be relativized to intervals to prove
that if $x<y$, then $x$ cannot be separated from $y$ unless
there is an atom below $y$ disjoint from $x$. This happens for every 
$x<y$ only when $B$ is an
atomic Boolean algebra.
To restate this: if the topology is Hausdorff, then the Boolean
algebra must be atomic.
\end{proof}

As the order topology on any poset is Hausdorff, 
proposition \ref{prop_keith} provides a negative answer
to Birkhoff's question.

\begin{corollary}
On $\mathcal{P}(\omega)/\mathrm{fin}$ the order topology
and the interval topology do not agree.
\end{corollary}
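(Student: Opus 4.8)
The plan is to read the corollary off immediately from the two topological facts already in hand: on $\mathcal{P}(\omega)/\mathrm{fin}$ the interval topology fails to be Hausdorff, while the order topology is Hausdorff, and no two topologies on the same underlying set can coincide when one of them is $T_2$ and the other is not.

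First I would check that Proposition \ref{prop_keith} applies to $B := \mathcal{P}(\omega)/\mathrm{fin}$. The only hypothesis needing a remark is atomlessness: given a nonzero class $[A]$ with $A\subseteq\omega$ infinite, split $A$ into two infinite pieces $A_0\sqcup A_1$; then $[A_0]$ is a nonzero element strictly below $[A]$, so $B$ has no atoms. Hence Proposition \ref{prop_keith} yields that $(B,\tau_i(B))$ is not Hausdorff. On the other hand $B$ is in particular a poset, so the corollary of Section 3 — every poset is Hausdorff in its order topology, since order-convergence of a filter is single-valued by definition and the implication (1)$\Rightarrow$(2) of Proposition \ref{hausdorfflem} applies — gives that $(B,\tau_o(B))$ is Hausdorff.

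Finally, suppose toward a contradiction that $\tau_o(B)=\tau_i(B)$. Then $(B,\tau_i(B))=(B,\tau_o(B))$ would be Hausdorff, contradicting the previous paragraph; therefore $\tau_o(B)\neq\tau_i(B)$, which is the assertion of the corollary. There is essentially no obstacle to overcome here: all the real content sits in Proposition \ref{prop_keith}, and this corollary is just the observation that a Hausdorff and a non-Hausdorff topology on one set must differ. If one wants slightly more, the closing paragraph of the proof of Proposition \ref{prop_keith} shows that for a complete Boolean algebra the order and interval topologies agree precisely when the algebra is atomic, so $\mathcal{P}(\omega)/\mathrm{fin}$ is simply the most familiar witness of failure.
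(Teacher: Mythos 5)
Your argument is correct and is essentially the paper's own: the interval topology on $\mathcal{P}(\omega)/\mathrm{fin}$ is not Hausdorff by Proposition \ref{prop_keith}, the order topology on any poset is Hausdorff by the corollary of Section 3, hence the two topologies differ. Your explicit verification that $\mathcal{P}(\omega)/\mathrm{fin}$ is atomless is a small useful addition that the paper leaves implicit, but the route is the same.
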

Note that on $\mathcal{P}(\omega)/\mathrm{fin}$ is
the order topology is strictly finer than the interval
topology, since on any poset, the order topology contains
the interval topology, see \cite{DZ}.
\section{Acknowledgement}
I want to thank Keith A.~Kearnes (University of Colorado, Boulder)
for his help on Proposition \ref{prop_keith} \cite{Kear}.
%................... bibliography .................
{\footnotesize

} %-- end footnotesize
\end{document}